\documentclass{article}

\usepackage{amsmath,amssymb,amsthm,mathtools}
\usepackage{enumitem}

\newtheorem{theorem}{Theorem}[section]
\newtheorem{proposition}[theorem]{Proposition}
\newtheorem{lemma}[theorem]{Lemma}

\theoremstyle{definition}

\theoremstyle{remark}

\title{There Are Exactly $2^\mathfrak{c}$ Simultaneously Bireflective and Bicoreflective Subconstructs of $\mathbf{FTS}$}
\author{Gao Zhang \\
  School of Mathematical Sciences \\
  Jiangsu Second Normal University \\
  Nanjing 211222, China \\
\texttt{gaozhang0810@hotmail.com}}
\date{}
\begin{document}

\maketitle

\begin{abstract}
  We prove that there are exactly $2^\mathfrak{c}$ simultaneously bireflective and bicoreflective subconstructs of $\mathbf{FTS}$. By the Lowen--Wuyts
  bijection, these correspond to the total fuzzy topologies contained in $M$, the fuzzy topology consisting of all nondecreasing left-continuous self-maps of the unit interval. The upper bound follows from a cardinality estimate. The matching lower bound is obtained by encoding $2^{\mathfrak c}$ ultrafilters on $\mathbb N$ into total fuzzy topologies contained in $M$.
\end{abstract}

\medskip
\noindent\textbf{MSC:}
54A40; 54B30.

\medskip
\noindent\textbf{Keywords:}
Fuzzy topology; bireflective subconstruct; bicoreflective subconstruct;
stable subconstruct.
\section{Introduction}

The construct $\mathbf{FTS}$ of fuzzy topological spaces admits
a rich family of full subconstructs that are both
bireflective and bicoreflective. Such a subconstruct is closed under
the formation of initial and final structures in $\mathbf{FTS}$ and
therefore provides a self-contained theory of fuzzy topological
spaces.

Throughout this paper, all subconstructs of $\mathbf{FTS}$ are understood to be full and replete and equipped with the restriction $U|_{\mathcal A}$ of the forgetful functor
$U\colon\mathbf{FTS}\to\mathbf{Set}$.

Many simultaneously bireflective and bicoreflective subconstructs of $\mathbf{FTS}$ have been studied in detail \cite{morsi1995a,morsiTwoTypesStable1995,lowenFuzzyNeighborhoodSpaces1982,lowen1995,wuytsReflectorsCoreflectorsCategory1988}, including the construct of fuzzy neighborhood spaces $\mathbf{FNS}$, the construct of fuzzy neighborhood base spaces $\mathbf{FNBS}$, and the full image of the category $\mathbf{TOP}$ of ordinary topological spaces under the Lowen functor.

Lowen and Wuyts \cite{lowen1995} showed that there is a bijection
\[
  \tau\longmapsto\operatorname{Stab}(\tau)
\]
from the total fuzzy topologies contained in $M$ onto the simultaneously bireflective and bicoreflective subconstructs of $\mathbf{FTS}$, where $M$ consists
of all nondecreasing left-continuous self-maps of the unit interval.
Using this bijection, they constructed $\mathfrak{c}$
such subconstructs contained in $\mathbf{FNS}$ and another
$\mathfrak{c}$ contained in $\mathbf{FNBS}$, where \(\mathfrak{c}=|\mathbb{R}|\) denotes the cardinality of the continuum.
Li and Luo \cite{li2003} showed that the number of simultaneously bireflective and bicoreflective
subconstructs of $\mathbf{FTS}$ contained in $\mathbf{FNS}$ is exactly
$\mathfrak{c}$. For the construct $\mathbf{Ap}$ of approach spaces, Lowen and Wuyts
\cite{lowen2000,lowen2011} showed there are exactly $\mathfrak{c}$
such subconstructs.

This raises the natural question of how many simultaneously bireflective and bicoreflective subconstructs of $\mathbf{FTS}$ there are in total.

In the present paper, we prove the following result.
\begin{theorem}\label{thm:main}
  There are exactly   \(
    2^{\mathfrak{c}}
  \) total fuzzy topologies contained in $M$, and hence, by the Lowen--Wuyts bijection, exactly   \(
  2^{\mathfrak{c}}\) simultaneously bireflective and bicoreflective subconstructs of $\mathbf{FTS}$.
\end{theorem}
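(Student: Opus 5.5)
The proof splits into an upper bound and a matching lower bound. Throughout I use the Lowen--Wuyts description of the objects: a total fuzzy topology contained in $M$ is a subset $\tau\subseteq M$ satisfying the closure conditions of that framework. I take these to be: containing the constants and the identity, and closure under arbitrary pointwise suprema, finite infima and composition. The argument below uses only that $\tau$ is a subset of $M$ and that $\tau$ is the smallest such family containing some generators. I would recheck the exact list of closure operations against the definition before writing the details.

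\emph{Upper bound.} First I show $|M|=\mathfrak c$. A nondecreasing left-continuous $f\colon[0,1]\to[0,1]$ satisfies $f(x)=\sup_{q<x,\,q\in\mathbb Q}f(q)$ for $x>0$. Hence $f$ is determined by $f(0)$ together with its restriction to $\mathbb Q\cap[0,1]$. This gives an injection of $M$ into $[0,1]^{\mathbb N}$, so $|M|\le\mathfrak c^{\aleph_0}=\mathfrak c$. The constants show $|M|\ge\mathfrak c$. Every total fuzzy topology contained in $M$ is a subset of $M$, so there are at most $2^{\mathfrak c}$ of them. Transporting along $\tau\mapsto\operatorname{Stab}(\tau)$ gives the same bound for subconstructs.

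\emph{Lower bound.} There are $2^{\mathfrak c}$ ultrafilters on $\mathbb N$ (Pospíšil), so it suffices to construct an injective assignment $\mathcal U\mapsto\tau_{\mathcal U}$.
\begin{enumerate}[label=(\roman*)]
\item Fix pairwise disjoint closed intervals $I_n=[a_n,b_n]\subseteq(0,1)$ accumulating only at $1$.
\item For $A\subseteq\mathbb N$, let $f_A\in M$ be the map that equals the identity on each $I_n$ with $n\in A$. On each $I_n$ with $n\notin A$ it has a single prescribed jump, for instance at the midpoint of $I_n$. It is increasing, and suitably interpolated, off $\bigcup_n I_n$.
\item Let $\tau_{\mathcal U}$ be the total fuzzy topology generated by $\{f_A : A\in\mathcal U\}$.
\end{enumerate}
For $\mathcal U\neq\mathcal V$, pick $A\in\mathcal U\setminus\mathcal V$. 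Then $f_A\in\tau_{\mathcal U}$, and it remains to show $f_A\notin\tau_{\mathcal V}$.

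To do this I define an invariant. For $g\in M$, let $E(g)$ be the set of $n$ such that the restriction $g|_{I_n}$ is locally of the ``admissible'' shape. Admissible means: either $g$ is constant near the jump point of $I_n$, or $g$ agrees there with an iterate or sup/inf-combination of the identity that is injective, that is, it does not introduce a jump at that point. Let $\mathcal P_{\mathcal V}$ be the class of $g\in M$ with $E(g)\in\mathcal V$. I then check four things.
\begin{enumerate}[label=(\alph*)]
\item The generators $f_B$ with $B\in\mathcal V$, the constants and the identity all lie in $\mathcal P_{\mathcal V}$.
\item $\mathcal P_{\mathcal V}$ is closed under composition and finite infima. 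Here I use that $E(g\circ h)\supseteq E(g)\cap E(h)$, $E(g\wedge h)\supseteq E(g)\cap E(h)$, and that $\mathcal V$ is a filter.
\item $\mathcal P_{\mathcal V}$ is closed under arbitrary suprema.
\item $f_A\notin\mathcal P_{\mathcal V}$, because $E(f_A)\cap(\mathbb N\setminus A)=\varnothing$ and $\mathbb N\setminus A\in\mathcal V$.
\end{enumerate}
From (a)--(c), $\tau_{\mathcal V}\subseteq\mathcal P_{\mathcal V}$, and then (d) gives $f_A\notin\tau_{\mathcal V}$.

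The main obstacle is step (c), closure under arbitrary suprema, together with choosing $E$ so that (a)--(d) hold simultaneously. A supremum of infinitely many functions can create or destroy a jump at a point. A large constant can flatten a block, and a sequence of left-continuous functions can converge to a jump from below. My first attempt is to make $E$ local at the single jump point $c_n$ of $I_n$. I would define $n\in E(g)$ iff the right limit $g(c_n^+)$ equals $g(c_n)$, that is, $g$ is right-continuous at $c_n$. The generators with $n\in B$ and the constants satisfy this, while $f_A$ fails it exactly off $A$. Composition and finite infima preserve right-continuity at a point when the inner map is continuous there. For suprema I would use left-continuity plus monotonicity. The sup of a family that is right-continuous at $c_n$ can only acquire a jump at $c_n$ via a jump occurring in some member. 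If this fails, I would shrink the invariant to a quantitative version, for example a bound on the size of the jump, which is stable under sups.
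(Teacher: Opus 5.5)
Your upper bound is correct and is the paper's argument. The lower bound has a genuine gap, and it is not confined to step (c): the construction itself fails.

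\textbf{The construction does not separate ultrafilters.} Take the natural realization of your generators: $f_A(x)=b_n$ for $x\in(c_n,b_n]$ with $n\notin A$, and $f_A(x)=x$ otherwise. Suprema then act blockwise, so $f_A=\bigvee_{m\notin A}f_{\mathbb N\setminus\{m\}}$ for every $A\neq\mathbb N$, while $f_{\mathbb N}=\mathrm{id}_{\mathbf I}$. A nonprincipal ultrafilter $\mathcal V$ contains every cofinite set, so $f_A\in\tau_{\mathcal V}$ for \emph{all} $A\subseteq\mathbb N$. Hence $\tau_{\mathcal V}$ is the same total fuzzy topology for all $2^{\mathfrak c}$ nonprincipal ultrafilters. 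Your claim $f_A\notin\tau_{\mathcal V}$ is false, and no choice of $E$ can prove it.

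The root cause is the orientation. A supremum \emph{intersects} the good sets, $E(\bigvee f_{A_j})=\bigcap_j A_j$, and a filter is not closed under infinite intersections.

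\textbf{Step (c) fails.} Let $g_j$ equal $b_n$ on $(c_n+(b_n-c_n)/(j+1),b_n]$ and the identity elsewhere. Each $g_j$ is right-continuous at every $c_n$, yet $\bigvee_j g_j=f_\varnothing$ is right-continuous at no $c_n$. Your fallback, a bound on jump size, fails on the same example: each $g_j$ has jump $0$ at $c_n$, but the supremum has jump $b_n-c_n$.

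\textbf{Step (b) fails.} Right-continuity of $g\circ h$ at $c_n$ depends on $g$ at $h(c_n)$, not at $c_n$. Let $h$ be an increasing homeomorphism of $\mathbf I$ with $h(c_n)\notin\{c_m\}$, and let $g$ jump at $h(c_n)$. Then $n\in E(g)\cap E(h)$ but $n\notin E(g\circ h)$.

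\textbf{What the paper does instead.} It avoids these problems with three devices.

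First, it reverses the orientation. $\rho_A$ jumps exactly on the blocks in $A$, and the pointwise invariant $\Pi(f)=\{k\mid f(x_k)>t_k\}$ satisfies $\Pi(\bigvee f_j)=\bigcup\Pi(f_j)$, $\Pi(f\wedge g)=\Pi(f)\cap\Pi(g)$ and $\Pi(\rho_A)=A$. The natural target is therefore a \emph{topology}, not a filter.

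Second, it handles the constants and the identity. $\Pi(\mathrm{id}_{\mathbf I})=\varnothing$, and $\Pi(\underline r)$ can be a finite set $\{0,\ldots,n\}$; neither lies in a nonprincipal ultrafilter. So the paper adjoins an extra block $*$ and uses $\mathcal T_{\mathcal U}=\mathcal P(\mathbb N)\cup\{\{*\}\cup U\mid U\in\mathcal U\}$. This family is closed under arbitrary unions precisely because $\mathcal U$ is upward closed. It recovers $\mathcal U$ as the sets $U$ with $\{*\}\cup U$ open.

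Third, the invariant never has to respect composition. The paper shows that the plain fuzzy topology generated by $\{\rho_A\}_{A\in\mathcal T}$ is already closed under composition. This uses $\rho_A\circ\rho_B=\rho_{A\cup B}$ and the fact that maps in $M$ preserve nonempty suprema (Lemma~\ref{lem:left}). Then $\{f\in M\mid \Pi(f)\in\mathcal T\}$ is a fuzzy topology containing the generators. This gives $\Pi[\Gamma_{\mathcal T}]=\mathcal T$, and hence injectivity of $\mathcal U\mapsto\Gamma_{\mathcal T_{\mathcal U}}$.
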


The upper bound follows from a cardinality estimate. To establish the matching lower bound, we construct $2^{\mathfrak{c}}$ distinct total fuzzy topologies contained in $M$.

\section{Preliminaries}
Let $\mathbf{I}=[0,1]$, and write $\underline{r}$ for the constant map with value
$r\in\mathbf{I}$. A fuzzy topology \cite{lowen1976} on a set $X$ is a family of functions $\tau\subseteq \mathbf{I}^X$ satisfying the following conditions:
\begin{enumerate}[label=(\arabic*)]
  \item \(\underline{r}\in\tau\) for every \(r\in\mathbf{I}\);
  \item \(\displaystyle\bigvee_{j\in J}\mu_j\in\tau\) for every
    family \((\mu_j)_{j\in J}\subseteq\tau\);
  \item \(\mu\wedge\nu \in\tau\) for all \(\mu,\nu\in\tau\).
\end{enumerate}The pair $(X,\tau)$ is called a fuzzy topological space. The intersection of any family of fuzzy topologies on $X$ is again a fuzzy topology. Given $F\subseteq \mathbf{I}^X$, the fuzzy topology generated by $F$ is the intersection of all fuzzy topologies on $X$ containing $F$.

A function $f\colon (X,\tau_1)\to(Y,\tau_2)$ between fuzzy topological spaces is called continuous if $\mu\circ f\in\tau_1$ for every $\mu\in\tau_2$. Fuzzy topological spaces and continuous functions form a category, denoted by $\mathbf{FTS}$.

Following Lowen \cite{lowen1976}, the Lowen functor
\[
  \omega\colon\mathbf{TOP}\longrightarrow\mathbf{FTS}
\]
assigns to each topological space \((X,\mathcal{T})\) the fuzzy
topological space \((X,\omega(\mathcal{T}))\), where
\[
  \omega(\mathcal{T})
  =
  \{\mu\colon X\to\mathbf I\mid
  \mu\text{ is lower semicontinuous}\}.
\]On morphisms, \(\omega\) acts as the identity on the underlying maps.

Let
\[
  \mathcal{T}_r
  =
  \{\mathbf I\}
  \cup
  \{(a,1]\mid a\in\mathbf I\}.
\]Then, it is easy to check that
\[
  \omega(\mathcal{T}_r)=M
  =
  \{f\colon\mathbf I\to\mathbf I\mid
  f\text{ is nondecreasing and left-continuous}\}.
\]

\begin{lemma}\label{lem:left}
  Every nondecreasing left-continuous map \(f\colon\mathbf I\to\mathbf I\) preserves nonempty suprema. That is,
  \[f(\bigvee S)=\bigvee_{s\in S}f(s)\]for every nonempty $S\subseteq\mathbf{I}$.
\end{lemma}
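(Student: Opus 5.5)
Let \(s=\bigvee S\). Since \(f\) is nondecreasing and \(t\le s\) for every \(t\in S\), we have \(f(t)\le f(s)\) for all \(t\in S\), and hence \(\bigvee_{t\in S}f(t)\le f(s)\). It therefore suffices to prove the reverse inequality \(f(s)\le\bigvee_{t\in S}f(t)\). I would split into two cases according to whether the supremum is attained.

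\emph{Case 1: \(s\in S\).} Then \(f(s)\) is itself one of the terms of the join, so \(f(s)\le\bigvee_{t\in S}f(t)\) and we are done. This covers, in particular, every singleton \(S\) and the case \(s=0\), since a nonempty \(S\subseteq\mathbf I\) with \(\bigvee S=0\) must equal \(\{0\}\).

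\emph{Case 2: \(s\notin S\).} Then every element of \(S\) is strictly below \(s\). Because \(S\) is nonempty, \(s>0\), and by the definition of supremum we can choose a sequence \(t_n\in S\) with \(t_n<s\) and \(t_n\to s\); passing to a monotone subsequence, we may assume \(t_n\uparrow s\). Left-continuity of \(f\) at \(s>0\) gives \(f(t_n)\to f(s)\). Each \(f(t_n)\) is at most \(\bigvee_{t\in S}f(t)\), so the limit satisfies \(f(s)\le\bigvee_{t\in S}f(t)\).

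There is no real obstacle here. The only points needing care are to rule out \(s=0\) in Case 2, where left-continuity is vacuous (it cannot occur, as noted above), and to interpret left-continuity as \(\lim_{x\to s^-}f(x)=f(s)\) for \(s\in(0,1]\), which is exactly the condition used. Nonemptiness of \(S\) is essential: for \(S=\emptyset\) we have \(\bigvee S=0\) but the empty join is \(0\), and \(f(0)\) need not be \(0\).
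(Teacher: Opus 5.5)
Your proof is correct and follows essentially the same route as the paper. Both split on whether $\bigvee S$ is attained, and in the non-attained case apply left-continuity at $\bigvee S>0$. The paper phrases that step through cofinality of $S$ in $[0,a)$ and the identity $f(a)=\bigvee_{x<a}f(x)$, whereas you use a sequence in $S$ increasing to $s$.
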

\begin{proof}
  Let $a=\bigvee S$. If $a\in S$, the conclusion is immediate. Otherwise,
  $S$ is cofinal in $[0,a)$, and hence, by monotonicity of $f$,
  \[
    \bigvee_{x<a}f(x)=\bigvee_{s\in S}f(s).
  \]
  Since $f$ is left-continuous,
  \[
    f(a)=\bigvee_{x<a}f(x)=\bigvee_{s\in S}f(s).
  \]
\end{proof}

A fuzzy topology $\tau$ on $\mathbf{I}$ is called total \cite{lowen1995} if
\[
  \mathrm{id}_{\mathbf{I}}\in\tau
  \qquad\text{and}\qquad
  f,g\in\tau\Longrightarrow f\circ g\in\tau.
\]It is easy to check that $M$ is a total fuzzy topology on $\mathbf{I}$.

We regard $\mathbf{FTS}$ as a construct \cite{adamekAbstractConcreteCategories1990}, that is, as a concrete
category over $\mathbf{Set}$ via the forgetful functor
\[
  U\colon\mathbf{FTS}\longrightarrow\mathbf{Set},
  \qquad
  U(X,\tau)=X.
\]

A subconstruct $\mathcal A$ of $\mathbf{FTS}$ is a subcategory
$\mathcal A$ of $\mathbf{FTS}$ equipped with the restricted
forgetful functor $U|_{\mathcal A}$.

Let $\mathcal A$ be such a subconstruct, and let
\[
  J\colon\mathcal A\hookrightarrow\mathbf{FTS}
\]
be the inclusion functor. The subconstruct $\mathcal A$ is reflective
if $J$ has a left adjoint $R$, and coreflective if $J$ has a right
adjoint $C$. Denote the corresponding unit and counit by
\[
  \eta_X\colon X\longrightarrow JR(X),
  \qquad
  \varepsilon_X\colon JC(X)\longrightarrow X.
\]
We call $\mathcal A$ bireflective, respectively bicoreflective, if it
is reflective, respectively coreflective, and every underlying map
$U(\eta_X)$, respectively $U(\varepsilon_X)$, is a bijection.

For $\Sigma\subseteq\mathbf I^{\mathbf I}$, a fuzzy topological space $(X,\tau)$
is called $\Sigma$-stable if $\sigma\circ\mu\in\tau$ for every
$\sigma\in\Sigma$ and every $\mu\in\tau$. We write $\operatorname{Stab}(\Sigma)$
for the full subconstruct of $\mathbf{FTS}$ whose objects are the
$\Sigma$-stable fuzzy topological spaces.

\begin{theorem}[Lowen--Wuyts {\cite[Theorem 2.11]{lowen1995}}]\label{thm:lowen}The map
  \[
    \tau\longmapsto\operatorname{Stab}(\tau)
  \]
  is a bijection from the total fuzzy topologies
  contained in $M$ onto the subconstructs of $\mathbf{FTS}$ that are both bireflective and bicoreflective.
\end{theorem}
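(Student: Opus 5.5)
The plan is to prove three things: that $\operatorname{Stab}(\tau)$ is bireflective and bicoreflective for every total $\tau\subseteq M$; that $\tau$ can be recovered from $\operatorname{Stab}(\tau)$; and that every bireflective and bicoreflective $\mathcal A$ is of this form. For the first two I use the standard facts for constructs \cite{adamekAbstractConcreteCategories1990}: a full replete subconstruct is bireflective iff it is closed under initial sources, and bicoreflective iff it is closed under final sinks.

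\emph{Step 1: closure of $\operatorname{Stab}(\tau)$.} Let $\tau\subseteq M$ be total.
\begin{itemize}
\item Final sinks. The final structure for $f_i\colon(X_i,\tau_i)\to X$ is $\{\mu : \mu\circ f_i\in\tau_i \text{ for all } i\}$. If each $\tau_i$ is $\tau$-stable, then $(\sigma\circ\mu)\circ f_i=\sigma\circ(\mu\circ f_i)\in\tau_i$, so the final structure is stable. This step does not use $\tau\subseteq M$.
\item Initial sources. For $f_i\colon X\to(X_i,\tau_i)$, the initial structure consists of arbitrary suprema of finite meets of maps $\mu\circ f_i$ with $\mu\in\tau_i$; constants are already of this form. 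Take $\sigma\in\tau\subseteq M$. By Lemma~\ref{lem:left}, $\sigma$ preserves nonempty suprema, and by monotonicity it preserves finite meets. The empty supremum is the constant $\underline 0$, and $\sigma\circ\underline 0$ is again a constant. Hence $\sigma\circ\bigvee_j\bigwedge_k(\mu_{jk}\circ f_{i_{jk}})=\bigvee_j\bigwedge_k\bigl((\sigma\circ\mu_{jk})\circ f_{i_{jk}}\bigr)$, which lies in the initial structure. This is exactly where $\tau\subseteq M$ is needed.
\end{itemize}

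\emph{Step 2: injectivity.} Since $\tau$ is total, $(\mathbf I,\tau)$ is $\tau$-stable, so it belongs to $\operatorname{Stab}(\tau)$. I claim
\[
\tau=\{\sigma\in\mathbf I^{\mathbf I} : \sigma\circ\mu\in\nu \text{ for every } (X,\nu)\in\operatorname{Stab}(\tau) \text{ and } \mu\in\nu\}.
\]
The inclusion ``$\subseteq$'' is the definition of stability. For ``$\supseteq$'', test on $(\mathbf I,\tau)$ with $\mu=\mathrm{id}_{\mathbf I}$. Thus $\tau$ is determined by $\operatorname{Stab}(\tau)$.

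\emph{Step 3: surjectivity.} Let $\mathcal A$ be bireflective and bicoreflective.
\begin{itemize}
\item Let $\tau_0$ be the fuzzy topology on $\mathbf I$ generated by $\mathrm{id}_{\mathbf I}$. For any space, $\mu\in\nu$ iff $\mu\colon(X,\nu)\to(\mathbf I,\tau_0)$ is continuous.
\item Let $(\mathbf I,\tau)$ be the $\mathcal A$-bicoreflection of $(\mathbf I,\tau_0)$. Continuity of the counit $\mathrm{id}\colon(\mathbf I,\tau)\to(\mathbf I,\tau_0)$ gives $\tau_0\subseteq\tau$, so $\mathrm{id}_{\mathbf I}\in\tau$.
\item $\mathcal A\subseteq\operatorname{Stab}(\tau)$. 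If $(X,\nu)\in\mathcal A$ and $\mu\in\nu$, then $\mu\colon(X,\nu)\to(\mathbf I,\tau_0)$ is continuous. By the coreflection property it factors continuously through $(\mathbf I,\tau)$, so $\sigma\circ\mu\in\nu$ for all $\sigma\in\tau$.
\item Totality. Since $(\mathbf I,\tau)\in\mathcal A\subseteq\operatorname{Stab}(\tau)$, the topology $\tau$ is closed under composition, so $\tau$ is total.
\item $\operatorname{Stab}(\tau)\subseteq\mathcal A$. Let $(X,\nu)$ be $\tau$-stable. Each $\mu\in\nu$ is then continuous into $(\mathbf I,\tau)$. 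Since $\mathrm{id}\in\tau$, the structure $\nu$ is initial for the source $(\mu\colon X\to(\mathbf I,\tau))_{\mu\in\nu}$, whose codomain lies in $\mathcal A$. Closure of $\mathcal A$ under initial sources gives $(X,\nu)\in\mathcal A$.
\end{itemize}

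\emph{Main obstacle: showing $\tau\subseteq M$.} The idea is to exploit closure of $\mathcal A$ under initial sources, which forces each $\sigma\in\tau$ to commute with the operations forming initial structures.
\begin{itemize}
\item Monotonicity. Take two copies of a suitable $\mathcal A$-space and a meet $\mu_1\wedge\mu_2$ in an initial structure. Requiring $\sigma\circ(\mu_1\wedge\mu_2)$ to be open should force $\sigma(a\wedge b)$ to be obtainable from $\sigma(a),\sigma(b)$ by finite meets and arbitrary suprema.
\item Left-continuity. Use the initial structure on $\mathbf I$ induced by the family of maps $x\mapsto x\wedge r$ for $r<a$, whose supremum is $\mathrm{id}$ near $a$. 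Stability of the supremum should force $\sigma(a)=\bigvee_{r<a}\sigma(r)$.
\end{itemize}
The delicate point is producing test spaces in $\mathcal A$ rich enough to separate points. My first attempt is to use products of copies of $(\mathbf I,\tau)$, which lie in $\mathcal A$ by closure under initial sources, and to read $\sigma$ off on the diagonal.
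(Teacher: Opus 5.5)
\textbf{Gap: the proof that $\tau\subseteq M$ is missing.} Your Steps 1--3 are sound: closure of $\operatorname{Stab}(\tau)$ under initial sources and final sinks, recovery of $\tau$ by testing on $(\mathbf I,\tau)$ with $\mathrm{id}$, and, for surjectivity, the coreflection of $(\mathbf I,\tau_0)$ giving a total $\tau$ with $\mathcal A=\operatorname{Stab}(\tau)$. (The paper gives no proof of this theorem; it only cites Lowen--Wuyts.) The one non-formal part, $\tau\subseteq M$, is left as a heuristic, so you have not shown that $\mathcal A$ comes from a total topology \emph{contained in $M$}.

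Your monotonicity mechanism cannot work as described. In $(\mathbf I,\tau)^2$ the open maps are $\bigvee_j f_j(x)\wedge g_j(y)$ with $f_j,g_j$ arbitrary (possibly non-monotone) elements of $\tau$, so the argument is circular. Concretely, for $\sigma=1-\mathrm{id}$ one has $\sigma(x\wedge y)=\sigma(x)\vee\sigma(y)$ and $\sigma(x\vee y)=\sigma(x)\wedge\sigma(y)$. Both are open in the square whenever $\sigma\in\tau$, and your heuristic conclusion holds for this non-monotone map. Reading off on the diagonal of a finite power fails for the same reason: basic opens may depend on all coordinates. Your left-continuity idea can be completed. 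In the initial structure on $\mathbf I$ induced by the maps $x\mapsto x\wedge r$ ($r<a$) into $(\mathbf I,\tau)$, each finite meet of generators is constant on some $[r^*,1]$ with $r^*<a$. Evaluating $\sigma(x\wedge a)=\bigvee_j R_j$ at $a$ then gives $\sigma(a)\le\bigvee_{r<a}\sigma(r)$.

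\textbf{The missing idea: use an infinite power, whose basic opens have finite support.}
\begin{enumerate}[label=(\roman*)]
\item \emph{Setup.} Since $\mathcal A$ is closed under initial sources, $P=(\mathbf I,\tau)^{\mathbb N}$ lies in $\mathcal A\subseteq\operatorname{Stab}(\tau)$. Its open maps are suprema $\bigvee_j R_j$ of finite meets $R_j$ of maps $f\circ p_n$ with $f\in\tau$, so each $R_j$ depends on only finitely many coordinates. The map $h=\bigvee_n p_n$ is open because $\mathrm{id}\in\tau$. Hence for $\sigma\in\tau$, stability gives $\sigma\circ h=\bigvee_j R_j$.
\item \emph{Monotonicity.} Suppose $a<b$ and $\sigma(a)>\sigma(b)$. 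Pick $j$ with $R_j(a,a,\dots)>\sigma(b)$, and change one coordinate not used by $R_j$ to $b$. Then $R_j$ is unchanged while $\sigma\circ h$ drops to $\sigma(b)$, contradicting $R_j\le\sigma\circ h$.
\item \emph{Left-continuity.} Suppose $r_n\uparrow a$ with $r_n<a$ and $\sigma(a)>s=\bigvee_n\sigma(r_n)$. Pick $j$ with $R_j(r_0,r_1,\dots)>s$, let $N$ bound the coordinates used by $R_j$, and replace every coordinate beyond $N$ by $r_0$. Then $\sigma\circ h$ becomes $\sigma(r_N)\le s$ while $R_j$ keeps its value, again a contradiction.
\end{enumerate}
Thus every $\sigma\in\tau$ is nondecreasing and left-continuous, i.e.\ $\tau\subseteq M$, which completes surjectivity.
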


\section{Proof of the main theorem}
\paragraph{The upper bound}
\begin{proposition}\label{upper}
  There exist at most $2^\mathfrak{c}$ total fuzzy topologies contained in $M$.
\end{proposition}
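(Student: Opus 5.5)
The plan is to bound $|M|$ and then use the trivial estimate that the number of subsets of $M$ is $2^{|M|}$. Every total fuzzy topology contained in $M$ is in particular a subset of $M$, so it suffices to show $|M|\le\mathfrak c$. Then there are at most $2^{|M|}\le 2^{\mathfrak c}$ subsets of $M$, hence at most $2^{\mathfrak c}$ total fuzzy topologies contained in $M$.

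To show $|M|\le\mathfrak c$, I will use Lemma~\ref{lem:left}: a map $f\in M$ is determined by its values on the countable set $D=(\mathbf I\cap\mathbb Q)\cup\{0\}$. First, $f(0)$ is recorded directly. Next, let $x\in(0,1]$, and let $S$ be the set of rationals in $[0,x)$. Then $S$ is nonempty and $\bigvee S=x$. By Lemma~\ref{lem:left},
\[
  f(x)=\bigvee_{s\in S}f(s),
\]
so $f(x)$ is determined by $f|_D$.

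Consequently the restriction map $M\to\mathbf I^{D}$, $f\mapsto f|_D$, is injective. This gives
\[
  |M|\le|\mathbf I|^{|D|}=\mathfrak c^{\aleph_0}=(2^{\aleph_0})^{\aleph_0}=2^{\aleph_0}=\mathfrak c .
\]
Combining this with the first paragraph yields the proposition.

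There is no real obstacle here. The only point needing care is that the restriction map is injective, and this is exactly what Lemma~\ref{lem:left} supplies. The value at $0$ is handled separately since it is not a supremum of a nonempty set of smaller points. The remaining step is the standard cardinal arithmetic $\mathfrak c^{\aleph_0}=\mathfrak c$.
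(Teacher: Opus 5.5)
Your proof is correct and follows essentially the same route as the paper's. Both arguments inject $M$ into $\mathbf I^{\mathbf I\cap\mathbb Q}$ using left-continuity, then bound the number of subsets of $M$ by $2^{|M|}\le 2^{\mathfrak c}$; you phrase the injectivity via Lemma~\ref{lem:left} applied to the rationals in $[0,x)$, while the paper uses the left limit along rationals, and adjoining $\{0\}$ to $D$ is harmless but redundant since $0$ is already rational.
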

\begin{proof}
  For every $f\in M$ and $x\in(0,1]$, since $f$ is left-continuous, we have
  \[
    f(x)=\lim\limits_{\substack{q\to x^-\\q\in\mathbb{Q}}}f(q).
  \]Thus, $f$ is determined by its values on $\mathbb{Q}\cap[0,1]$. Hence, we have that \[| M|\leq\mathfrak{c}^{\aleph_0}=(2^{\aleph_0})^{\aleph_0}=2^{\aleph_0\cdot\aleph_0}=\mathfrak{c}.\]Consequently, the number of total fuzzy topologies contained in $ M$ is at most $|\mathcal{P}(M)|= 2^{|M|}\leq 2^{\mathfrak{c}}$.
\end{proof}

\subsection{The lower bound}

In the remainder of this section, we construct $2^{\mathfrak{c}}$ distinct total fuzzy topologies contained in $M$. Let $K$ be the set obtained by adjoining a point $*$ to $\mathbb{N}$, that is, $K=\mathbb{N}\cup\{*\}=\{0,1,2,\ldots,*\}$.

Set \[a_n=\frac12-2^{-n-1},\qquad
b_n=\frac12-\frac{3}{2^{n+3}}\]for $n\in\mathbb{N}$ and set $(a_*,b_*)=(3/4,7/8).$ These define a family of pairwise disjoint intervals. For example: \[(a_0,b_0)=(0,1/8),\quad(a_1,b_1)=(1/4,1/4+1/16),\quad(a_2,b_2)=(3/8,3/8+1/32),\ \dots\]

For every $A\subseteq K$, define $\rho_A:\mathbf{I}\to\mathbf{I}$ by
\[
  \rho_A(x)=
  \begin{cases}
    b_k,&\text{if }k\in A\text{ and }a_k<x\le b_k,\\
    x,&\text{otherwise}.
  \end{cases}
\]
\begin{lemma}\label{rhoalem}
  For all $A,B\subseteq K$ and every nonempty family $\{A_j\}_{j\in J}$:
  \begin{enumerate}[label=\normalfont(\arabic*)]
    \item $\rho_A\in M; $
    \item $\bigvee_{j\in J}\rho_{A_j}=\rho_{\bigcup_{j\in J}A_j}$;
    \item $\rho_A\wedge\rho_B=\rho_{A\cap B}$;
    \item $\rho_A\circ\rho_B=\rho_{A\cup B}$.

  \end{enumerate}
\end{lemma}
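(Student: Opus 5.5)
The plan is to check everything pointwise, using the key structural fact that the half-open intervals $(a_k,b_k]$, $k\in K$, are pairwise disjoint and that $\rho_A$ is the identity outside $\bigcup_{k\in A}(a_k,b_k]$. First I will record disjointness explicitly. For $n\in\mathbb N$ we have $b_n=a_n+2^{-n-3}<a_n+2^{-n-2}=a_{n+1}$. Moreover $a_n<\tfrac12<\tfrac34=a_*$. So the intervals $(a_n,b_n]$ are strictly increasing in $n$, all lie below $\tfrac12$, and are disjoint from $(a_*,b_*]$. Consequently, for each $x\in\mathbf I$ there is at most one $k\in K$ with $a_k<x\le b_k$. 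Then for any $A$ we have $\rho_A(x)=b_k$ if that $k$ exists and lies in $A$, and $\rho_A(x)=x$ otherwise. In every case $x\le\rho_A(x)$.

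For (1), I will show that $\rho_A$ is nondecreasing and left-continuous. On each $(a_k,b_k]$ with $k\in A$, $\rho_A$ is constant $b_k$. Elsewhere it is the identity. For monotonicity, take $x<y$. If both lie in the same collapsed interval, the values are equal. If $x\in(a_k,b_k]$ with $k\in A$ and $y$ lies outside that interval, then $y>b_k$, so $\rho_A(y)\ge y>b_k=\rho_A(x)$. If $x$ is not collapsed, then $\rho_A(x)=x<y\le\rho_A(y)$. For left-continuity at $x$: if $x$ lies in an open interval of identity behaviour, this is clear. If $x\in(a_k,b_k]$ with $k\in A$, then points slightly left of $x$ lie in $(a_k,x)$, where $\rho_A$ equals $b_k$, so the left limit is fine. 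The delicate case is $x$ not collapsed but a limit from the left of collapsed intervals. The main such point is $x=\tfrac12$, approached by the $(a_n,b_n]$, and the left endpoints $x=a_k$. Here I use the sandwich $y\le\rho_A(y)\le\rho_A(x)=x$ for $y<x$; the upper bound holds because every collapsed interval left of $x$ is mapped below $x$, since $b_k<x$. Letting $y\to x^-$ gives $\rho_A(y)\to x$. I expect this accumulation point $\tfrac12$ to be the only real obstacle, and the sandwich settles it.

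For (2) and (3), fix $x$. If no $k$ has $x\in(a_k,b_k]$, every $\rho_{A_j}(x)=x$ and both sides equal $x$. Otherwise let $k$ be the unique such index. Then $\rho_{A_j}(x)\in\{x,b_k\}$ with $x\le b_k$. The supremum over the nonempty family equals $b_k$ iff $k\in A_j$ for some $j$, i.e.\ iff $k\in\bigcup_j A_j$. The minimum of $\rho_A(x)$ and $\rho_B(x)$ equals $b_k$ iff $k\in A\cap B$. (When $x=b_k$ both options coincide anyway.) Nonemptiness is needed so that the empty supremum $\underline 0$ is not involved.

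For (4), fix $x$. If $x$ is in no interval, then $\rho_B(x)=x$ and $\rho_A(x)=x$, matching $\rho_{A\cup B}(x)=x$. If $x\in(a_k,b_k]$ with $k\in B$, then $\rho_B(x)=b_k\in(a_k,b_k]$. Hence $\rho_A(b_k)=b_k$ regardless of whether $k\in A$, which matches $\rho_{A\cup B}(x)=b_k$. If $k\notin B$, then $\rho_B(x)=x$ and $\rho_A(x)$ equals $b_k$ iff $k\in A$, i.e.\ iff $k\in A\cup B$. This completes the plan. The only nontrivial point is left-continuity at $\tfrac12$, which is handled by the inequality $x\le\rho_A(x)$.
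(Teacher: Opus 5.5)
Your proof is correct and takes the paper's route: a pointwise check on the pairwise disjoint intervals $(a_k,b_k]$, the sandwich $y\le\rho_A(y)\le\rho_A(x)=x$ for left-continuity at the accumulation point $\tfrac12$, and the identity $\rho_A(b_k)=b_k$ for the composition law. Your version is more explicit: you verify disjointness directly and apply the sandwich uniformly at every non-collapsed point, which also covers the endpoints $a_k$ (these were never delicate, since $\rho_A$ is the identity just to their left).
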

\begin{proof}
  Since $\rho_A$ is obtained by replacing the identity map on each $(a_k,b_k]$, $k\in A$, with the constant value $b_k,$ it is clear that $\rho_A$ is nondecreasing. For left-continuity, the only nontrivial point to check is $x=1/2.$ Since $\rho_A$ is nondecreasing, the left-continuity of $\rho_A$ at $1/2$ follows from \[\dfrac{1}{2}=\bigvee_{x<1/2} x\leq\bigvee_{x<1/2}\rho_A(x)\leq\dfrac{1}{2}=\rho_A\left(\dfrac{1}{2}\right). \]

  To prove (2)--(4), it suffices to consider points $x$ belonging to some $(a_k,b_k]$, since all the maps involved agree with the identity map elsewhere.

  Therefore,
  \[
    \bigvee_{j\in J}\rho_{A_j}(x)
    =
    \begin{cases}
      b_k,& k\in\bigcup_{j\in J}A_j,\\
      x,& k\notin\bigcup_{j\in J}A_j,
    \end{cases}
  \]
  which proves (2), and
  \[
    (\rho_A\wedge\rho_B)(x)
    =
    \begin{cases}
      b_k,& k\in A\cap B,\\
      x,& k\notin A\cap B,
    \end{cases}
  \]
  which proves (3).

  Finally, since $\rho_A(b_k)=b_k$ for every $A\subseteq K$, we have
  \[
    (\rho_A\circ\rho_B)(x)
    =
    \begin{cases}
      b_k,& k\in A\cup B,\\
      x,& k\notin A\cup B,
    \end{cases}
  \]
  which proves (4).
\end{proof}

Let $\mathcal{T}$ be a topology on $K$ and $\Gamma_\mathcal{T}$ be the fuzzy topology on $\mathbf{I}$ generated by $\{\rho_A\}_{A\in\mathcal{T}}.$
\begin{proposition}\label{prop:total}
  $\Gamma_\mathcal{T}$ is a total fuzzy topology contained in $M$.
\end{proposition}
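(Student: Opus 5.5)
The plan is to describe $\Gamma_\mathcal{T}$ explicitly and then verify totality on that description. Set
\[
  G=\{\rho_A\mid A\in\mathcal T\}\cup\{\underline r\mid r\in\mathbf I\}.
\]
The candidate for $\Gamma_\mathcal{T}$ is the family $\Gamma$ of all suprema of nonempty families of finite meets of elements of $G$. First I will simplify this family using Lemma~\ref{rhoalem}(3). A finite meet of generators has the form $\rho_A\wedge\underline r$ with $A\in\mathcal T$ (using $K\in\mathcal T$ and $\rho_K$ if needed), or is a constant $\underline r$. Since $\emptyset\in\mathcal T$ and $\rho_\emptyset=\mathrm{id}$, the constant $\underline r$ is dominated by $\underline r\vee(\rho_\emptyset\wedge\underline r)$, so nothing is lost by keeping the constants separately. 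Thus $\Gamma$ consists of the maps $\bigvee_{j}(\rho_{A_j}\wedge\underline{r_j})\vee\underline s$.

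Standard reasoning shows that this $\Gamma$ is the generated fuzzy topology:
\begin{itemize}[label=$\bullet$]
  \item it contains the constants and is closed under arbitrary suprema;
  \item finite meets distribute over suprema in $\mathbf I^{\mathbf I}$, since $\mathbf I$ is a frame, so $\Gamma$ is closed under finite meets;
  \item it is contained in every fuzzy topology containing the $\rho_A$.
\end{itemize}

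Next I show $\Gamma_\mathcal{T}\subseteq M$. The constants and the $\rho_A$ (by Lemma~\ref{rhoalem}(1)) lie in $M$, and $M=\omega(\mathcal T_r)$ is a fuzzy topology. Hence the generated topology is contained in $M$. It also contains $\mathrm{id}=\rho_\emptyset$, since $\emptyset\in\mathcal T$.

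The main step is closure under composition. I will prove $f\circ g\in\Gamma_\mathcal T$ for all $f,g\in\Gamma_\mathcal T$ in two stages.

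\emph{Stage 1: fix $g$ and vary $f$.} Let $\Phi_g=\{f\in\mathbf I^{\mathbf I}\mid f\circ g\in\Gamma_\mathcal T\}$. Since $(\bigvee f_j)\circ g=\bigvee(f_j\circ g)$, $(f\wedge f')\circ g=(f\circ g)\wedge(f'\circ g)$, and $\underline r\circ g=\underline r$, the family $\Phi_g$ is a fuzzy topology. So it suffices to show $\rho_A\circ g\in\Gamma_\mathcal T$ for $A\in\mathcal T$ and $g\in\Gamma_\mathcal T$.

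\emph{Stage 2: handle $\rho_A\circ g$.} Fix $A$ and write $g=\bigvee_j h_j$ with each $h_j\in G$-meet form, which is nonempty after adding the constant term. Now $\rho_A\in M$, and by Lemma~\ref{lem:left} it preserves nonempty suprema pointwise. Hence $\rho_A\circ g=\bigvee_j\rho_A\circ h_j$, and it suffices to treat $h=\rho_B\wedge\underline r$ and $h=\underline s$. The case $\rho_A\circ\underline s$ is a constant. For the other case, since $\rho_A$ is monotone it commutes with binary meets, so
\[
  \rho_A\circ(\rho_B\wedge\underline r)=(\rho_A\circ\rho_B)\wedge\underline{\rho_A(r)}=\rho_{A\cup B}\wedge\underline{\rho_A(r)},
\]
using Lemma~\ref{rhoalem}(4). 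Since $A\cup B\in\mathcal T$, this lies in $\Gamma_\mathcal T$.

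The delicate point is exactly this use of Lemma~\ref{lem:left}: composition on the left by a generator must commute with arbitrary suprema. Left-continuity of the $\rho_A$ is precisely what provides this. The union-closure of $\mathcal T$ is what keeps $\rho_{A\cup B}$ inside $\Gamma_\mathcal T$.
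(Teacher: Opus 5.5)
Your proof is correct, and it differs from the paper's only in Stage~2.

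Stage~1 is exactly the paper's argument with $L_h$: right composition by a fixed $h$ preserves suprema, meets and constants, so it suffices to check the generators $\rho_A$.

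For Stage~2 the paper does not unfold $\Gamma_\mathcal{T}$ into a normal form. Instead it shows that $H_A=\{g\in M\mid \rho_A\circ g\in\Gamma_\mathcal{T}\}$ is itself a fuzzy topology:
\begin{itemize}
  \item it is closed under nonempty suprema by Lemma~\ref{lem:left};
  \item it is closed under binary meets by monotonicity of $\rho_A$;
  \item it contains the constants, since $\rho_A\circ\underline r$ is constant.
\end{itemize}
It then observes that $H_A$ contains every generator because $\rho_A\circ\rho_B=\rho_{A\cup B}$, so $\Gamma_\mathcal{T}\subseteq H_A$.

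The ingredients are the same as yours; only the packaging differs. The paper's double ``closure-set'' argument is shorter and never identifies the generated topology. It therefore needs no frame-distributivity check and uses only $\varnothing\in\mathcal T$ and closure of $\mathcal T$ under unions. As written, your version also uses intersection-closure, via Lemma~\ref{rhoalem}(3), to collapse meets of the $\rho_{A_i}$. In return, your route gives an explicit description of $\Gamma_\mathcal{T}$ as the maps $\bigvee_j(\rho_{A_j}\wedge\underline{r_j})\vee\underline s$. That is extra information not needed for the proposition.

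Two cosmetic points, neither affecting correctness since you list the constants separately:
\begin{itemize}
  \item The aside ``using $K\in\mathcal T$ and $\rho_K$ if needed'' is unnecessary. A meet of $\rho$'s alone is $\rho_A=\rho_A\wedge\underline 1$, and $\rho_K\wedge\underline r\neq\underline r$ in general.
  \item The sentence about $\underline r$ being dominated by $\underline r\vee(\rho_\varnothing\wedge\underline r)$ is vacuous, since that expression equals $\underline r$.
\end{itemize}
You were right to keep the family in Stage~2 nonempty by including $\underline s$, so that Lemma~\ref{lem:left} applies.
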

\begin{proof}
  Since $ M$ is a fuzzy topology and the generators of $\Gamma_\mathcal{T}$ belong to $ M,$ every element of $\Gamma_\mathcal{T}$ belongs to $M$. Since $\rho_{\varnothing}=\mathrm{id},$ we have $\mathrm{id}\in\Gamma_\mathcal{T}.$

  It remains to prove that $\Gamma_\mathcal{T}$ is closed under composition. Fix some $A\in \mathcal{T}$, and let \[
    H_A=\{g\in  M\mid\rho_A\circ g\in\Gamma_\mathcal{T}\}.
  \]Given a nonempty family $\{f_i\}_{i\in I}\subseteq H_A$, by Lemma \ref{lem:left}, we have \[\rho_A\circ\left(\bigvee_{i\in I}f_i\right)=\bigvee_{i\in I}\left(\rho_A\circ f_i\right)\in\Gamma_\mathcal{T}.\]Monotonicity of $\rho_A$ shows that $H_A$ is closed under binary infima. For every $r\in\mathbf{I}$, it follows from $\rho_A\circ\underline{r}=\underline{\rho_A(r)}\in\Gamma_{\mathcal{T}}$ that all constant functions belong to $H_A.$ Thus, $H_A$ is a fuzzy topology.

  For every $B\in\mathcal{T},$ it follows from Lemma \ref{rhoalem} that $\rho_A\circ\rho_B=\rho_{A\cup B}\in\Gamma_\mathcal{T}$. Thus, the generators of $\Gamma_\mathcal{T}$ belong to $H_A$. Consequently, $\Gamma_\mathcal{T}\subseteq H_A$.

  Given $h\in \Gamma_\mathcal{T}$, let\[L_h=\{g\in  M\mid g\circ h\in\Gamma_\mathcal{T}\}.\] It is routine to check that $L_h$ is a fuzzy topology. For every $B\in\mathcal{T},$ since $h\in\Gamma_\mathcal{T}\subseteq H_B,$ it follows that $\rho_B\circ h\in\Gamma_\mathcal{T},$ and hence $\rho_B\in L_h$. Therefore, $\Gamma_\mathcal{T}\subseteq L_h$, which implies that $f\circ h\in \Gamma_\mathcal{T}$ for any $f\in\Gamma_\mathcal{T}.$ Since $h\in\Gamma_\mathcal{T}$ was arbitrary, we have that $\Gamma_\mathcal{T}$ is a total fuzzy topology.
\end{proof}

For every $k\in K$, choose numbers
\[
  a_k<x_k<t_k<b_k.
\]Define:
\begin{align*}
  \Pi\colon & M\to\mathcal{P}(K)\\
  &f\longmapsto\{k\in K\mid f(x_k)>t_k\}.
\end{align*}
\begin{lemma}
  \label{pilemma}
  For every family $(f_j)_{j\in J}\subseteq  M$, $f,g\in  M$ and $A\subseteq K$, we have
  \begin{enumerate}[label=\normalfont(\arabic*)]
    \item $\Pi\left(\bigvee_{j\in J}f_j\right)
      =\bigcup_{j\in J}\Pi(f_j)$;
    \item $\Pi(f\wedge g)=\Pi(f)\cap\Pi(g)$;
    \item $\Pi(\rho_A)=A$.
  \end{enumerate}
\end{lemma}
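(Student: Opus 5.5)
All three parts are pointwise computations at the chosen points $x_k$, using only the definition of $\Pi$ and the fact that suprema and infima in $\mathbf I^{\mathbf I}$ are computed pointwise. No earlier lemma is needed beyond the definition of $\rho_A$.

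For (1), the key step is the elementary fact that for a family of reals $(s_j)_{j\in J}$ and a real $t$, we have $\bigvee_j s_j>t$ if and only if $s_j>t$ for some $j$. This holds because the supremum exceeds $t$ exactly when $t$ is not an upper bound. Applying it with $s_j=f_j(x_k)$ and $t=t_k$ gives $k\in\Pi(\bigvee_j f_j)$ if and only if $k\in\Pi(f_j)$ for some $j$, for each $k\in K$. I would also treat the empty family $J=\varnothing$ separately. There $\bigvee_j f_j=\underline{0}$, and $\Pi(\underline 0)=\varnothing$ because $t_k>a_k\ge 0$, so both sides are $\varnothing$ and the identity still holds.

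For (2), the argument is the same with minima: $\min(f(x_k),g(x_k))>t_k$ if and only if both $f(x_k)>t_k$ and $g(x_k)>t_k$. Hence $\Pi(f\wedge g)=\Pi(f)\cap\Pi(g)$.

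For (3), I evaluate $\rho_A$ at $x_k$. Since $a_k<x_k<b_k$, the point $x_k$ lies in $(a_k,b_k]$, and because the intervals are pairwise disjoint it lies in no other $(a_{k'},b_{k'}]$. There are two cases.
\begin{itemize}
  \item If $k\in A$, then $\rho_A(x_k)=b_k>t_k$, so $k\in\Pi(\rho_A)$.
  \item If $k\notin A$, then $\rho_A(x_k)=x_k<t_k$, so $k\notin\Pi(\rho_A)$.
\end{itemize}
Thus $\Pi(\rho_A)=A$.

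The only point requiring care is the strict inequality in the definition of $\Pi$. It is exactly what makes $\Pi$ commute with arbitrary suprema in (1); a non-strict inequality would fail for suprema that are not attained. Beyond that, I expect no real obstacle.
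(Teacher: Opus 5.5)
Your proposal is correct and follows essentially the same pointwise argument as the paper: the strict-inequality characterization of suprema at the points $x_k$ for (1), with the empty family handled separately, minima for (2), and direct evaluation of $\rho_A$ at $x_k$ for (3). You simply spell out the details (for instance $\Pi(\underline{0})=\varnothing$ because $t_k>a_k\ge 0$) that the paper labels ``immediate.''
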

\begin{proof}
  If $J=\varnothing$, (1) is immediate. If $J\not=\varnothing$, then (1) follows from the fact that for every $k\in K$ we have\[ \bigvee_j f_j(x_k)>t_k
    \Longleftrightarrow
  f_i(x_k)>t_k\text{ for some }i\Longleftrightarrow k\in\bigcup_{j\in J}\Pi(f_j).\]

  Statements (2) and (3) are immediate.
\end{proof}
A topology $\mathcal{T}$ on $K$ is called constant-compatible if\[
  \Pi(\underline r)\in\mathcal{T}
  \quad\text{for every }r\in \mathbf{I}.
\]Equivalently, $\{0,1,\ldots,n\}\in\mathcal{T}$ for every $n\in\mathbb{N}$.
\begin{proposition}
  \label{prop:recovery}
  Let $\mathcal{T}$ be a constant-compatible topology on $K$. Then
  \[
    \Pi[\Gamma_\mathcal{T}]=\mathcal{T}.
  \]Consequently, the assignment $\mathcal{T}\mapsto\Gamma_\mathcal{T}$ is injective on the set of constant-compatible topologies.
\end{proposition}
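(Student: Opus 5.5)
We prove the two inclusions separately. For $\mathcal{T}\subseteq\Pi[\Gamma_\mathcal{T}]$, take $A\in\mathcal{T}$. Then $\rho_A\in\Gamma_\mathcal{T}$ as a generator, and by Lemma \ref{pilemma}(3) we have $\Pi(\rho_A)=A$, so $A\in\Pi[\Gamma_\mathcal{T}]$.

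For the reverse inclusion $\Pi[\Gamma_\mathcal{T}]\subseteq\mathcal{T}$, we use the same ``good set'' argument as in Proposition \ref{prop:total}. Let
\[
  G=\{f\in M\mid \Pi(f)\in\mathcal{T}\}.
\]
We show that $G$ is a fuzzy topology on $\mathbf{I}$ containing all the generators $\rho_A$, $A\in\mathcal{T}$; then $\Gamma_\mathcal{T}\subseteq G$, which is the claim. We check the three axioms in turn.
\begin{enumerate}[label=(\roman*)]
  \item Constants: $\Pi(\underline r)\in\mathcal{T}$ is exactly the constant-compatibility hypothesis.
  \item Arbitrary suprema: $M$ is a fuzzy topology, so $\bigvee_j f_j\in M$. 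By Lemma \ref{pilemma}(1), $\Pi(\bigvee_j f_j)=\bigcup_j\Pi(f_j)$, which lies in $\mathcal{T}$ because topologies are closed under arbitrary unions. The empty family is harmless: its supremum is $\underline 0$, which is covered by (i).
  \item Binary infima: $f\wedge g\in M$, and by Lemma \ref{pilemma}(2), $\Pi(f\wedge g)=\Pi(f)\cap\Pi(g)\in\mathcal{T}$ by closure under finite intersections.
\end{enumerate}
The generators lie in $G$ by Lemma \ref{pilemma}(3). Hence $G$ is a fuzzy topology containing $\{\rho_A\}_{A\in\mathcal{T}}$, so $\Gamma_\mathcal{T}\subseteq G$, that is, $\Pi[\Gamma_\mathcal{T}]\subseteq\mathcal{T}$.

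The only point needing care is the side claim that constant-compatibility is equivalent to $\{0,\dots,n\}\in\mathcal{T}$ for all $n$. Since $\Pi(\underline r)=\{k\mid r>t_k\}$ and the $t_n$ increase to $1/2$ with $t_*>1/2$, the sets $\Pi(\underline r)$ are $\varnothing$, the initial segments $\{0,\dots,n\}$, $\mathbb{N}$, and $K$. Now $\mathbb{N}$ is the union of the initial segments, and $\varnothing$ and $K$ are always open, so the equivalence holds. In any case, the proof above only uses the defining condition $\Pi(\underline r)\in\mathcal{T}$.

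Injectivity then follows immediately. If $\Gamma_\mathcal{T}=\Gamma_{\mathcal{T}'}$ for constant-compatible topologies $\mathcal{T}$ and $\mathcal{T}'$, then $\mathcal{T}=\Pi[\Gamma_\mathcal{T}]=\Pi[\Gamma_{\mathcal{T}'}]=\mathcal{T}'$.
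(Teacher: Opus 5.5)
Your proof is correct and is essentially the paper's argument: the paper also shows that $C_\mathcal{T}=\{f\in M\mid \Pi(f)\in\mathcal{T}\}$ is a fuzzy topology containing the generators, using Lemma \ref{pilemma} and constant-compatibility, and gets the other inclusion from $\Pi(\rho_A)=A$. Your extra verification that constant-compatibility is equivalent to $\{0,\dots,n\}\in\mathcal{T}$ for all $n$ is also correct, though the proposition does not need it.
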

\begin{proof}
  Let \[
    C_\mathcal{T}=\{f\in  M\mid\Pi(f)\in\mathcal{T}\}.
  \]Since $\mathcal{T}$ is constant-compatible, $\underline{r}\in C_\mathcal{T}$ for all $r\in\mathbf{I}$. Then, it follows from Lemma \ref{pilemma} that $C_\mathcal{T}$ is a fuzzy topology. Since $\rho_A\in C_\mathcal{T}$ for every $A\in\mathcal{T}$, all the generators of $\Gamma_\mathcal{T}$ belong to $C_\mathcal{T}$, and therefore $\Gamma_\mathcal{T}\subseteq C_\mathcal{T}.$ It follows that $\Pi[\Gamma_\mathcal{T}]\subseteq\mathcal{T}.$ Conversely, if $A\in\mathcal{T}$, then $\rho_A\in\Gamma_\mathcal{T}$ and $\Pi(\rho_A)=A$.  Thus
  $\mathcal{T}\subseteq\Pi[\Gamma_\mathcal{T}]$.
\end{proof}

Given an ultrafilter $\mathcal{U}$ on $\mathbb{N}$, define
\[ \mathcal{T}_{\mathcal U}
  =\mathcal{P}(\mathbb{N})
  \cup
\bigl\{\{*\}\cup U\mid U\in\mathcal U\bigr\}.\]
\begin{lemma}\label{lem:ultra}For every ultrafilter $\mathcal{U}$ on $\mathbb{N}$,
  \begin{enumerate}[label=\normalfont{(\arabic*)}]
    \item $\mathcal{T}_{\mathcal{U}}$ is a constant-compatible topology on $K$;
    \item the map $\mathcal U\mapsto\mathcal{T}_{\mathcal U}$ is injective.
  \end{enumerate}
\end{lemma}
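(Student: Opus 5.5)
We verify the two claims of Lemma \ref{lem:ultra} directly from the definition of $\mathcal{T}_{\mathcal U}$.

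\textbf{Part (1).} First, $\mathcal{T}_{\mathcal U}$ is a topology on $K$.
\begin{itemize}
\item[] \emph{Extreme sets.} $\varnothing\in\mathcal P(\mathbb N)$, and $K=\{*\}\cup\mathbb N$ with $\mathbb N\in\mathcal U$.
\item[] \emph{Unions.} Let $\{V_j\}$ be a family in $\mathcal{T}_{\mathcal U}$. If no $V_j$ contains $*$, the union is a subset of $\mathbb N$ and lies in $\mathcal P(\mathbb N)$. Otherwise some $V_{j_0}=\{*\}\cup U_0$ with $U_0\in\mathcal U$. Then the union is $\{*\}\cup W$ with $W\supseteq U_0$, so $W\in\mathcal U$ because $\mathcal U$ is upward closed.
\item[] \emph{Binary intersections.} If neither set contains $*$, the intersection is a subset of $\mathbb N$. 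If exactly one contains $*$, the intersection is again a subset of $\mathbb N$. If both do, $(\{*\}\cup U)\cap(\{*\}\cup U')=\{*\}\cup(U\cap U')$, and $U\cap U'\in\mathcal U$.
\end{itemize}
Constant-compatibility holds because each $\{0,\dots,n\}$ is a subset of $\mathbb N$, hence lies in $\mathcal P(\mathbb N)\subseteq\mathcal{T}_{\mathcal U}$. This uses the equivalent form of constant-compatibility stated in the paper.

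That equivalence is worth checking, since part (1) relies on it. We have $\Pi(\underline r)=\{k\mid r>t_k\}$. The $t_n$ increase to $1/2$, and $t_*>3/4$. So for $r\le 1/2$, $\Pi(\underline r)$ is a finite initial segment of $\mathbb N$, possibly empty. For $1/2<r\le t_*$ it is $\mathbb N$. For $r>t_*$ it is $K$. Each of these lies in $\mathcal{T}_{\mathcal U}$: the first two are subsets of $\mathbb N$, and $K$ was handled above. This confirms directly that $\mathcal{T}_{\mathcal U}$ is constant-compatible.

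\textbf{Part (2).} The key observation is that $\mathcal U$ can be recovered from $\mathcal{T}_{\mathcal U}$ as
\[
\mathcal U=\{U\subseteq\mathbb N\mid \{*\}\cup U\in\mathcal{T}_{\mathcal U}\}.
\]
Indeed, a set containing $*$ lies in $\mathcal{T}_{\mathcal U}$ only if it has the form $\{*\}\cup U$ with $U\in\mathcal U$. The sets in $\mathcal P(\mathbb N)$ do not contain $*$, so they never contribute to this recovery. Since $\mathcal U$ is determined by $\mathcal{T}_{\mathcal U}$, the map $\mathcal U\mapsto\mathcal{T}_{\mathcal U}$ is injective.

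The argument has no real obstacle. The only point needing care is the union and intersection case analysis according to whether $*$ belongs to the sets involved.
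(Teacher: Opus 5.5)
Your proof is correct and follows the paper's argument: unions are handled by upward closure of $\mathcal U$, binary intersections by closure under finite intersections, constant-compatibility holds because each $\{0,\dots,n\}\subseteq\mathbb N$, and $\mathcal U$ is recovered from the open sets containing $*$. Your extra check has one harmless slip: since every $t_n<b_n<1/2$, we get $\Pi(\underline{1/2})=\mathbb N$ rather than a finite initial segment, so the cases should be $r<1/2$ and $1/2\le r\le t_*$; the conclusion is unaffected.
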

\begin{proof}
  (1) follows from the filter axioms: unions are handled by the upward closure of $\mathcal U$, and finite
  intersections by its closure under finite intersections. Constant-compatibility is immediate.

  Since \[
    \mathcal U
    =
    \{U\subseteq\mathbb N\mid
    \{*\}\cup U\in\mathcal T_{\mathcal U}\},
  \]
  the map $\mathcal U\mapsto\mathcal T_{\mathcal U}$ is
  injective. This proves (2).
\end{proof}
We are now ready to prove the lower bound.
\begin{proposition}\label{lower}
  There exist at least $2^{\mathfrak{c}}$ total fuzzy topologies contained in $M$.
\end{proposition}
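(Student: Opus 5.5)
The plan is to compose the injections already built. For an ultrafilter $\mathcal U$ on $\mathbb N$, Lemma \ref{lem:ultra}(1) says that $\mathcal T_{\mathcal U}$ is a constant-compatible topology on $K$. Proposition \ref{prop:total} then gives that $\Gamma_{\mathcal T_{\mathcal U}}$ is a total fuzzy topology contained in $M$. This yields the assignment
\[
  \mathcal U\longmapsto \Gamma_{\mathcal T_{\mathcal U}}
\]
from the set $\beta\mathbb N$ of ultrafilters on $\mathbb N$ into the set of total fuzzy topologies contained in $M$. It remains to show that this assignment is injective and that $|\beta\mathbb N|=2^{\mathfrak c}$.

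Injectivity is a two-step composition. By Lemma \ref{lem:ultra}(2), distinct ultrafilters give distinct topologies $\mathcal T_{\mathcal U}$. All of these topologies are constant-compatible, so Proposition \ref{prop:recovery} gives $\Pi[\Gamma_{\mathcal T_{\mathcal U}}]=\mathcal T_{\mathcal U}$. Hence $\mathcal T_{\mathcal U}$ is recovered from $\Gamma_{\mathcal T_{\mathcal U}}$, and distinct topologies yield distinct total fuzzy topologies. Therefore the composite map is injective, and the number of total fuzzy topologies contained in $M$ is at least $|\beta\mathbb N|$.

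The only remaining ingredient, and the one real obstacle, is the classical Pospíšil theorem: there are exactly $2^{\mathfrak c}$ ultrafilters on $\mathbb N$. I would cite it rather than reprove it. If a sketch were desired, I would use an independent family $\{E_\alpha\}_{\alpha<\mathfrak c}$ of subsets of $\mathbb N$ (Fichtenholz--Kantorovich--Hausdorff). For each $\phi\colon\mathfrak c\to\{0,1\}$, the family consisting of $E_\alpha$ when $\phi(\alpha)=1$ and $\mathbb N\setminus E_\alpha$ when $\phi(\alpha)=0$ has the finite intersection property, by independence. It therefore extends to an ultrafilter $\mathcal U_\phi$, and different $\phi$ give different ultrafilters. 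This yields $2^{\mathfrak c}$ ultrafilters. The independent family itself can be built on the countable set of pairs $(F,\mathcal F)$ with $F\subseteq\mathbb N$ finite and $\mathcal F\subseteq\mathcal P(F)$, taking $E_X=\{(F,\mathcal F)\mid X\cap F\in\mathcal F\}$ for $X\subseteq\mathbb N$.

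Combining the injection with $|\beta\mathbb N|=2^{\mathfrak c}$ gives the lower bound. Together with Proposition \ref{upper} and Theorem \ref{thm:lowen}, this completes Theorem \ref{thm:main}.
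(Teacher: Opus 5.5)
Your proposal is correct and follows the paper's proof: the composite $\mathcal U\mapsto\mathcal T_{\mathcal U}\mapsto\Gamma_{\mathcal T_{\mathcal U}}$ lands in the total fuzzy topologies contained in $M$ by Lemma \ref{lem:ultra}(1) and Proposition \ref{prop:total}, and it is injective by Lemma \ref{lem:ultra}(2) together with Proposition \ref{prop:recovery}. The count of $2^{\mathfrak c}$ ultrafilters on $\mathbb N$ is simply cited in both (the paper cites Engelking), and your optional independent-family sketch of Posp\'{\i}\v{s}il's theorem is also correct.
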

\begin{proof}
  By \cite{engelkingGeneralTopology1989}, there are $2^\mathfrak{c}$ ultrafilters on $\mathbb{N}$. The conclusion then follows from Lemma \ref{lem:ultra}, Propositions \ref{prop:recovery} and \ref{prop:total}.
\end{proof}
\begin{proof}[Proof of Theorem \ref{thm:main}]
  Propositions \ref{upper} and \ref{lower} show that there are exactly
  $2^{\mathfrak c}$ total fuzzy topologies contained in $M$.
  The Lowen--Wuyts bijection in Theorem \ref{thm:lowen} therefore yields exactly
  $2^{\mathfrak c}$ simultaneously bireflective and bicoreflective
  subconstructs of $\mathbf{FTS}$.
\end{proof}
\bibliographystyle{plain}
\bibliography{ref}
\end{document}